\theoremstyle{plain}
\newtheorem{theorem}{Theorem}[section]
\newtheorem{lemma}[theorem]{Lemma}
\theoremstyle{definition}
\newtheorem{example}{Example}
\theoremstyle{remark}
\begin{document}

\title[Rich system in Riemannian geometry]
      {Rich quasi-linear system for integrable geodesic flows on 2-torus}

\date{20 July 2009}
\author{Misha Bialy and Andrey E. Mironov}
\address{M.Bialy, School of Mathematical Sciences, Raymond and Beverly Sackler Faculty of Exact Sciences, Tel Aviv University,
Israel}
\email{bialy@post.tau.ac.il}
\address{A.E. Mironov, Sobolev Institute of Mathematics and Novosibirsk
State University, 630090 Novosibirsk, Russia }
\email{mironov@math.nsc.ru}
\thanks{The second author (A.M) was partially supported by grants
MK-5430.2008.1 of the President of Russian Federation
and Sibirian Branch of RAS (the interdisciplinary integration
project N. 65)}

\subjclass[2000]{ }
 \keywords{Geodesic flows, Polynomial integrals
Riemann invariants, genuine nonlinearity, Rich systems}

\begin{abstract}
Consider a Riemannian metric on two-torus. We prove that the
question of existence of polynomial first integrals leads naturally
to a  remarkable system of quasi-linear equations which turns out to
be a Rich system of conservation laws. This reduces the question of
integrability to the question of existence of smooth (quasi-)
periodic solutions for this Rich quasi-linear system.

\end{abstract}

\maketitle

%%%%%%%%%%%%%%%%%%%%%%%%%%%%%%%%%%%%%%%%%%%%%%%%%%%%%%%%%%%%%%%%%%%%%%%%%%
\section{Introduction}
\label{sec:intro} In this paper we study the problem of existence of
integrals which are homogeneous polynomials with respect to momenta
for geodesic flows on 2-torus. We show that this problem is
equivalent to the problem of finding periodic solutions for a
remarkable quasi-linear system of equations. We shall prove that in
a domain of hyperbolicity this system can be represented in Riemann
invariants, and also have a form of conservation laws. Such systems
are called the rich systems (or semi-hamiltonian). It is a
challenging problem to show that a given system of size greater than
2 do not have smooth solutions. For Rich systems,  the classical
analysis by Lax along characteristics can be used to establish the
shock formation provided the so-called genuine non-linearity of
eigenvalues is satisfied (see \cite{serre} and also \cite{B1} where
this analysis was performed for a problem of integrability for a
system of 1,5 degrees of freedom). However in our case this genuine
nonlinearity condition does not necessarily holds. This leaves a
hope to find new examples of non-trivial integrals for geodesic
flows on 2-torus. It would be interesting to study if the
generalized hodograph method in the region of hyperbolicity
\cite{tsarev} could give a non-trivial information on smooth
solutions for this system. We hope to come back to these questions
in subsequent paper.

Consider a geodesic flow on the 2-torus ${\mathbb T}^2={\mathbb
R}^2/\mathbb{Z}^2$. Let
$$ds^2=\sum_{i,j=1}^2 g_{ij}(q)dq^idq^j,H=\sum_{i,j=1}^2
g^{ij}(q)p_ip_j$$
 be a Riemannian metric and the corresponding Hamiltonian function
 of the geodesic flow. The geodesic flow is called integrable, if
the Hamiltonian system
$$
 \dot{q}^j=\frac{\partial H}{\partial p_j},\  \dot{p}_j=-\frac{\partial H}{\partial
 q^j},j=1,2,\eqno{(1)}
$$
on $T^*{\mathbb T}^2\simeq {\mathbb T}^2\times{\mathbb R}^2,$ admits
a smooth function $F(q,p)$ (called a first integral) (functionally
independent with $H$ almost everywhere) which has constant values
along the trajectories of the geodesic flow, i.e.
$$
 \{F,H\}=\sum_{j=1}^2\left(\frac{\partial
 F}{\partial q^j}\frac{\partial
 H}{\partial p_j}-\frac{\partial
 H}{\partial q^j}\frac{\partial
 F}{\partial p_j}\right)=0.
$$
We shall denote by $\tau$ the time variable along the trajectories
and $\frac {d}{d\tau}$ will be denoted (untraditionally) by dot
while the variable $t$ will be used later as a coordinate on
2-torus. In this paper we shall deal with the case when $F$ is a
homogeneous polynomial of certain degree with $C^3$ coefficients.

There are two classically known classes of metrics with integrable
geodesic flows. They are written in a conformal coordinates as
follows:

1. $ds^2=f(x)(dx^2+dy^2)$,

2. $ds^2=(f(x)+g(y))(dx^2+dy^2)$

In the first case the Hamiltonian system admits a one parametric
group of symmetries and has the integral which is a polynomial of
the first degree with respect to momenta. While in the second case
the integral appears to be of the second degree and is related to
separation of variables in Hamilton-Jacobi equation. The existence
of the metrics on ${\mathbb T}^2$ with the integrable geodesic flows
having polynomial integrals of the degree higher than 2 and
non-reducible to the integrals of the 1 and 2 degree, is not known.
Amazingly there exist non-trivial examples of geodesic flows on
2-sphere with integrals which are homogeneous polynomials of degrees
3 and 4. These examples (see \cite{selivanova} and \cite{dullin})
were inspired by integrable cases of Goryachev-Chaplygin and
Kovalevskaya in rigid body dynamics (see \cite{bolsinov}).

It is important to mention that in the classical examples 1,2 as
well as in the examples on the sphere mentioned above the metric is
presented in conformal coordinates. This approach goes back at least
to Darboux, and was studied extensively (see for example
\cite{hall}, \cite{kol}, \cite{topalov}) We the reader refer to a
nice exposition \cite {perelomov}.

The main idea of our approach is to work in different coordinates
which are angle coordinates associated to an invariant torus of the
geodesic flow. The advantage of these coordinates is a possibility
to write the quasi-linear equations on the coefficients of the
unknown integral and the metric in the form of evolution equations
as it is described in the following:

\begin{theorem}
\label{one} Suppose that the Hamiltonian system (1) has an integral
$F$, which is a homogeneous polynomial of degree $n$. Then on the
covering plane  ${\mathbb R}^2$ there exist the global coordinates
$(t,x)$, where the metric has the following form
$$
 ds^2=g^2(t,x)dt^2+dx^2,
$$
and the integral $F$ can be written in the form
$$
F=\sum_{k=0}^n \frac{a_{k}(t,x)}{g^{n-k}}p_1^{n-k}p_2^{k},
$$
Where the last two coefficients can be normalized to be
$a_{n-1}\equiv g$ and $a_n\equiv 1.$ Then the commutation relation
$\{F,H\}=0$ is equivalent to the system of $n$ quasi-linear
equations on the unknown $U=(a_0,\dots,a_{n-2},a_{n-1})^{T}$
(remember $ a_{n-1}\equiv g$ and $a_n\equiv1$):
$$
 U_t+A(U)U_x=0,\eqno{(2)}
$$
where the matrix $A$ has the form:
$$
 A=  \left(
  \begin{array}{cccccc}
   0 & 0 & \dots &
  0 & 0 & a_1  \\
  a_{n-1} &
 0 & \dots & 0 & 0 & 2a_2-na_0\\0 &
 a_{n-1} & \dots & 0 & 0 & 3a_3-(n-1)a_1\\
 \dots & \dots & \dots & \dots & \dots & \dots \\
 0 &
 0 & \dots & a_{n-1} & 0 & (n-1)a_{n-1}-3a_{n-3}\\
 0 &
 0 & \dots & 0 & a_{n-1} & na_n-2a_{n-2}\\
  \end{array}\right).\eqno{(3)}
$$
The functions $a_i,g$ are periodic on the variable $x$,
and quasi-periodic on the variable $t$.

\end{theorem}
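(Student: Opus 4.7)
The plan is first to exhibit semigeodesic (Fermi) coordinates $(t,x)$ on the universal cover of $\mathbb{T}^2$ whose coordinate lines $t=\mathrm{const}$ are unit-speed geodesics distinguished by the integral $F$, and then to expand $\{F,H\}=0$ coefficient-by-coefficient in $(p_1,p_2)$ to read off the evolution system.

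For the coordinates, I use the existence of $F$ to select a preferred invariant direction field on $\mathbb{T}^2$, e.g.\ as a real linear factor of $F$ viewed as a polynomial in the momenta, or as the tangent direction of geodesics lying on a regular invariant 2-torus of the flow restricted to $\{H=1\}$. Lift this field to $\mathbb{R}^2$, choose an orthogonal transversal, and set $x$ to be signed arclength along the leaves and $t$ to be arclength along the transversal. Unit-speed parameterization along the leaves gives $g_{22}\equiv 1$; Gauss's lemma then propagates orthogonality and yields $g_{12}\equiv 0$ globally; hence $ds^2=g^2(t,x)\,dt^2+dx^2$. The $\mathbb{Z}^2$ deck action on $\mathbb{R}^2$ pushes forward to a two-parameter group of translations in the $(t,x)$-plane; choosing the transversal so that one generator acts as a pure $x$-translation yields periodicity in $x$, while the other generator induces a quasi-periodic shift in $t$.

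With $H=p_1^2/g^2+p_2^2$, substitute $F=\sum_{k=0}^{n}a_k(t,x)g^{k-n}p_1^{n-k}p_2^k$ into the Poisson bracket
\[
\{F,H\}=F_tH_{p_1}-F_{p_1}H_t+F_xH_{p_2}-F_{p_2}H_x,
\]
which is a homogeneous polynomial of degree $n+1$ in $(p_1,p_2)$ that must vanish identically. The top coefficient (of $p_2^{n+1}$) gives $(a_n)_x=0$, so $a_n=a_n(t)$; the next coefficient (of $p_1p_2^n$), after averaging over a period in $x$, forces $a_n$ to be constant, which we rescale to $1$; the same identity then reduces to $(a_{n-1}/g)_x=0$, so $a_{n-1}/g$ depends only on $t$, and reparameterizing $t$ normalizes $a_{n-1}\equiv g$. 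The vanishing of the remaining $n$ coefficients, those of $p_1^{n+1-k}p_2^{k}$ for $k=1,\dots,n$, is linear in the derivatives of $(a_0,\dots,a_{n-1})$ and polynomial in the $a_j$ themselves. Solving each for the $t$-derivative of one $a_{k-1}$ and collecting assembles the system $U_t+A(U)U_x=0$; a direct entry-by-entry match yields the explicit matrix (3), with the subdiagonal $a_{n-1}$'s arising from $F_xH_{p_2}$ combined with the chain rule on $g^{k-n}$, and the last column $ka_k-(n-k+2)a_{k-2}$ arising from $F_tH_{p_1}-F_{p_1}H_t$ on adjacent monomials.

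The main obstacle is the global construction of the semigeodesic coordinates on $\mathbb{R}^2$: one must ensure that the geodesic foliation singled out by $F$ has no focal (conjugate) points anywhere on $\mathbb{R}^2$, so that $(t,x)$ are genuine global coordinates, and that the $\mathbb{Z}^2$ action descends to the stated (quasi-)periodicity. This is the step where integrability is used substantively; everything after it is a careful but largely mechanical Poisson-bracket bookkeeping.
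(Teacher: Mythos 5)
There is a genuine gap at the central step, and you have in fact named it yourself: the existence of a global geodesic foliation of $\mathbb{R}^2$ with no focal points, i.e.\ of an invariant Lagrangian torus $L\subset T^*\mathbb{T}^2$ that projects diffeomorphically onto the base. Calling this ``the main obstacle'' does not discharge it, and it is precisely where all the difficulty lives; the paper does not derive it from scratch either but invokes a nontrivial theorem (Theorem~1.6 of \cite{B3}) guaranteeing such a torus, lying in a regular level $\{F=f\}$ and consisting of periodic trajectories. Your two proposed sources for the distinguished direction field do not fill the hole: a homogeneous polynomial $F$ in the momenta need not admit a real linear factor (already for $n=2$ it can be definite on each fibre), and ``the tangent direction of geodesics on a regular invariant torus'' presupposes exactly the existence and graph property that must be proved. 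Without this input the theorem statement itself (global $(t,x)$ on the covering plane with $ds^2=g^2dt^2+dx^2$) is not established.

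A second, smaller issue concerns the normalizations and the (quasi-)periodicity. The paper does not build orthogonal Fermi coordinates from an orthogonal transversal; it takes the angle variables $(\varphi_1,\varphi_2)$ of $L$, proves (using Euler's identity $p_1F_{p_1}+p_2F_{p_2}=nF=n$ on $L$) that the off-diagonal metric coefficient $b$ is a \emph{constant}, and then applies the linear shear $\varphi_1=t$, $\varphi_2=x-bt$. This makes the deck action affine, so periodicity in $x$ and quasi-periodicity in $t$ are immediate, and it identifies $a_{n-1}/g$ with the constant frequency $\omega_1$ of the $F$-flow, so that $a_{n-1}\equiv g$ is achieved by a \emph{linear} rescaling $t\to rt$. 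In your version, the bracket only yields $\partial_x(a_{n-1}/g)=0$, so $a_{n-1}/g$ is a priori a nonconstant function $c(t)$; normalizing it away requires the nonlinear reparametrization $d\tilde t/dt=1/c(t)$, which you must additionally check is well defined ($c$ nowhere zero) and compatible with the claimed quasi-periodic structure and with the translation form of the deck action. Your coefficient-by-coefficient expansion of $\{F,H\}=0$ and the extraction of $(a_n)_x=0$ and of $a_n=\mathrm{const}$ by averaging in $x$ are fine (the paper omits this computation), but the geometric backbone of the argument is missing.
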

Let us recall that a hyperbolic diagonal system
$$
 (r_i)_t+\lambda_i(r_1,\dots,r_n)(r_i)_x=0, i=1,..,n
$$ is called Rich if the eigenvalues satisfy the following
conditions
$$\partial_{r_k}\left(\frac{\partial_{r_i}\lambda_j}{\lambda_i-\lambda_j}\right)=
\partial_{r_i}\left(\frac{\partial_{r_k}\lambda_j}{\lambda_k-\lambda_j}\right).$$
It can be proved that a diagonal system is Rich if and only if it
can be written in some coordinates as a system of conservation laws
(we refer to \cite {serre} for a nice exposition of rich systems
including the blow up analysis and to \cite{dubrovin},\cite{tsarev}
for Hamiltonian formalism for system of Hydrodynamics type).

For our system the following theorem holds

\begin{theorem}
The system (2) is the Rich quasi-linear system system. More
precisely:

1. In the region of hyperbolicity (all eigenvalues are real and
distinct) there exists a change of variables (Riemann invariants)
$(a_0,\dots,a_n)\rightarrow(r_1,\dots,r_n)$ transforming the system
to a diagonal form:
$$
 (r_i)_t+\lambda_i(r_1,\dots,r_n)(r_i)_x=0, i=1,..,n.
$$

2. There exists a regular change of variables
$(a_0,\dots,a_n)\rightarrow (G_1,\dots,G_n)$ such that
$G_i,i=1,..,n$ are conservation laws:
$$
 (G_i(a_0,\dots,a_n))_t+(H_i(a_0,\dots,a_n))_x=0, i=1,..,n .
$$

\end{theorem}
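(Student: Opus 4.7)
The plan is to exploit the special block structure of the matrix $A$. Every column except the last has a single nonzero entry $\sigma := a_{n-1}$ on the subdiagonal, while the last column holds the quantities $(k{+}1)a_{k+1}-(n{-}k{+}1)a_{k-1}$. Reading the left-eigenvector equation $\ell^\top A = \lambda\,\ell^\top$ column by column forces the Vandermonde form $\ell^{(i)} = (1,\mu_i,\mu_i^2,\dots,\mu_i^{n-1})$ with $\mu_i := \lambda_i/\sigma$; the last column then gives a single polynomial identity in $\mu$. Packaging the unknowns as
$$P(\mu;t,x) := \sum_{k=0}^n a_k(t,x)\,\mu^k \qquad (a_n\equiv 1,\; a_{n-1}\equiv\sigma),$$
this identity collapses, after cancellation of leading $\mu^{n+1}$-terms, to
$$(1+\mu^2)\,P'(\mu) - n\mu\,P(\mu) = 0,$$
the characteristic equation, of degree $n$ in $\mu$. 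The crucial observation is that this is equivalent to $\partial_\mu\Phi = 0$ for the generating function $\Phi(\mu;U) := P(\mu;U)/(1+\mu^2)^{n/2}$: the eigenvalues $\lambda_i = \sigma\mu_i$ are exactly $\sigma$ times the critical points of $\Phi$ in $\mu$.

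For Part~1, in the hyperbolic region the $n$ critical points of $\Phi$ are real and distinct, the Vandermonde eigenvectors are linearly independent, and the Riemann invariants emerge as the critical values themselves,
$$r_i(U) := \Phi(\mu_i(U);U) = \frac{P(\mu_i)}{(1+\mu_i^2)^{n/2}}.$$
The envelope-theorem identity $dr_i = \Phi_U(\mu_i;U)\,dU = (1+\mu_i^2)^{-n/2}\,\ell^{(i)}\!\cdot\! dU$ (using $\Phi_\mu(\mu_i;U)=0$) shows each left eigen-1-form is proportional to an exact form, so contracting $U_t + AU_x = 0$ with $\ell^{(i)}$ yields $(r_i)_t + \lambda_i(r)(r_i)_x = 0$ with $\lambda_i = \sigma\mu_i$.

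For Part~2 the natural object is the \emph{master equation} obtained by computing $\sum_k \mu^k$ times the $k$-th equation of (2):
$$P_t + \sigma\mu\, P_x + \bigl[(1+\mu^2)P'(\mu) - n\mu\, P(\mu)\bigr]\,\sigma_x = 0.$$
Its $\mu^0$-coefficient immediately gives $(a_0)_t + (\sigma^2/2)_x = 0$, so $G_1 = a_0$ and $H_1 = \sigma^2/2$. To produce further conservation densities, multiply the master equation by a weight $\rho(\mu)$ and integrate: after one integration by parts in $\mu$ the $\sigma_x$-term collects as $\sigma_x\int[3\mu\rho+(1+\mu^2)\rho']\Phi\,d\mu$, which vanishes identically in $U$ for $\rho\propto (1+\mu^2)^{-3/2}$ and yields a second conservation law. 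Additional densities are then produced either by upgrading $\rho$ to depend on $U$, or more systematically by invoking Tsarev's theorem: once Part~1 provides the diagonal form, verifying the semi-Hamiltonian identity
$$\partial_k\!\!\left(\frac{\partial_i\lambda_j}{\lambda_i-\lambda_j}\right) = \partial_i\!\!\left(\frac{\partial_k\lambda_j}{\lambda_k-\lambda_j}\right)$$
yields an infinite family of conservation laws, in particular $n$ functionally independent ones. The main obstacle is precisely this verification: $\mu_i$ and $\sigma$ must be differentiated implicitly in the $r_k$'s via the characteristic equation, and the three-way cancellation required for the semi-Hamiltonian identity demands careful bookkeeping. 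A cleaner alternative would be an explicit $U$-dependent ansatz $\rho(\mu;U)$ giving $n$ independent conservation densities directly from the master equation, but locating the right family is itself the nontrivial step.
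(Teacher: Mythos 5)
Your Part 1 is correct and is essentially the paper's own argument in algebraic dress: setting $\mu=\tan\varphi$ and $\sigma=g=a_{n-1}$, your generating function $\Phi(\mu;U)=P(\mu)/(1+\mu^2)^{n/2}$ is exactly the restriction $F=\sum_k a_k\cos^{n-k}\varphi\,\sin^k\varphi$ of the integral to the unit circle in the fibre, your characteristic identity $(1+\mu^2)P'-n\mu P=0$ is the paper's Lemma 3.1 ($\chi_A(\lambda)=-g^{n-1}\cos^{-n}\varphi\,F_\varphi$ with $\lambda=g\tan\varphi$), and the Riemann invariants $r_i=\Phi(\mu_i;U)$ are the paper's critical values $r_i=F(\varphi_i)$, with the same Vandermonde regularity argument. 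The only cosmetic difference is that you derive $(r_i)_t+\lambda_i(r_i)_x=0$ by contracting with left eigenvectors via the envelope identity, while the paper reads it off the transport equation $F_t\cos\varphi/g+F_x\sin\varphi+F_\varphi\dot\varphi=0$ along trajectories; these are the same computation.

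Part 2, however, has a genuine gap, and one of your few concrete claims there is false. The $\mu^0$-coefficient of your (correctly derived) master equation is $(a_0)_t+P'(0)\,\sigma_x=(a_0)_t+a_1\,\sigma_x=0$ --- this is just the first row of the system --- and it is \emph{not} $(a_0)_t+(\sigma^2/2)_x=0$ unless $n=2$ (where $a_1=a_{n-1}=\sigma$). For $n>2$ the term $a_1\sigma_x$ is not an exact $x$-derivative of any function of $U$ (a flux $H_1(U)$ would need $\partial H_1/\partial a_{n-1}=a_1$ and $\partial H_1/\partial a_1=0$, contradicting equality of mixed partials), so $a_0$ is not a conserved density. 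The weighted-integral scheme that follows is not carried out, and as written the integrals $\int\rho\,P\,d\mu$ with $\rho\propto(1+\mu^2)^{-3/2}$ diverge once $\deg P=n\ge 2$; the fallback on Tsarev's theorem requires verifying the semi-Hamiltonian identity, which you explicitly do not do. The missing idea is geometric and is the heart of the paper's Section 4: every invariant torus $L_i\subset\{F=c_i\}$ near $L$ that projects diffeomorphically is a graph $p=f_i(t,x)$, where $p=x'/\sqrt{(x')^2+g^2}$ is the momentum of the reduced $1{,}5$-degree-of-freedom system with Hamiltonian $\mathcal{H}=-g\sqrt{1-p^2}$; invariance of the graph gives $(f_i)_t+\bigl(\mathcal{H}(f_i,x,t)\bigr)_x=0$, and since $f_i$ is determined from $F(a_0,\dots,a_{n-1},p)=c_i$ as a function of the field variables alone, each such torus \emph{is} a conservation law of system (2). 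Choosing $n$ disjoint such tori and checking that the Jacobian $\partial(f_1,\dots,f_n)/\partial(u_1,\dots,u_n)$ is invertible (a Vandermonde-type matrix times an invertible diagonal one, by implicit differentiation of $F(U,f_i)=c_i$) yields the regular change of variables. If you want to repair your argument within your own framework, the correct replacement for your weights is to solve $\Phi(\mu;U)=c_i$ for $\mu$ (equivalently $F(U,p)=c_i$ for $p$) rather than to integrate against fixed kernels in $\mu$.
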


We shall see that both Riemann invariants and Conservation laws of
the theorem are ultimately related to the phase portrait of the
geodesic flow.

There are no general methods for proof of existence or non-existence
of smooth periodic solutions of the system of quasi-linear equations
where the eigenvalues can collide and become complex. However in
\cite{B1}, \cite{B2} non-existence of nontrivial periodic solutions
for the so-called Benney chain is proved. It relies heavily on the
property of genuine non-linearity of smallest and largest
eigenvalues. It seems however that  for our system (2) the genuine
non-linearity condition doesn't necessarily holds. This gives a hope
to find smooth (quasi-) periodic solutions for the system.

Let us mention that, starting at $t=0$ with a periodic in $x$
initial data for the Cauchy problem which is hyperbolic. Then one
can "solve" the initial value problem and thus to obtain a metric
$g$ and the integral $F$ of the geodesic flow on a cylinder
$\mathbb{S}^1(x)\times\{ |t|<\epsilon \}$. Of course this is local
result in $t$ and this metric is not necessarily complete. It would
be very interesting to understand this "catastrophe" in more
details.

In the Section 1 we prove the first theorem. We split the proof of
the second theorem: in the Sections 2 and 3 we prove respectively,
that the system (2) can be represented in the Riemann invariants and
in the form of conservation laws.

\section*{Acknowledgements}
We would like to thank  Sergey Petrovich Novikov  and Oleg Mohov for
their interest in the subject of this paper and encouragement. We
are also thankful to Steve Schochet and Marshall Slemrod for
interesting discussions on quasi-linear systems. We are grateful to
Tel Aviv University for hospitality and excellent working
conditions.
%%%%%%%%%%%%%%%%%%%%%%%%%%%%%%%%%%%%%%%%%%%%%%%%%%%%%%%%%%%%%%%%%%%%%

\section{Fermi coordinates. Proof of Theorem 1.1}
The metric of the form $
 ds^2=g^2(t,x)dt^2+dx^2
$ is of course very well known in Riemannian geometry. It can always
be introduced locally near a given geodesic. In such a case the
coordinates $(t,x)$ are called usually Fermi coordinates or
sometimes equi-distant coordinates. It turns out that for integrable
geodesic flow these coordinates can be chosen globally. This is done
with the help of a regular invariant torus of the flow which
projects diffeomorphically on the base. The existence of such a
torus is rather deep fact. More precisely, it is proved in \cite{B3}
(Theorem 1.6) that if the geodesic flow on 2-torus has a
non-constant polynomial integral, then there exists an invariant
Lagrangian torus $L\subset T^*{\mathbb T}^2$, such, that the mapping
$$
 \pi|_L:L\rightarrow {\mathbb T}^2
$$
is a diffeomorphism, where $\pi$  is the canonical projection of the
cotangent bundle. Moreover this torus lies in a regular level $F=f$
and it consists of periodic trajectories. One can assume that $f\neq
0$ by perturbing a little the level if it happened to be zero.
Multiplying $F$ by a constant we can achieve $f=1$, this we shall
assume in the sequel. Let $\varphi_1,\varphi_2$ denote the "angle"
part of the action-angle coordinates on $L$ (see \cite{arnold}). In
these coordinates both Hamiltonian flows on $L$, $f^{\tau}$ of $F$
and $h^{\tau}$ of $H$ are linearized. Therefore, we can assume
without loss of generality that the Hamiltonian fields of $H,F$ on
$L$ equal in these coordinates to: $${\rm sgrad} H=(0,1),\quad {\rm
sgrad} F=(\omega_1,\omega_2),$$ where $\omega_1$, $\omega_2$ are
some frequencies. Since $
 \pi|_L:L\rightarrow {\mathbb T}^2
$ is a diffeomorphism, we take $\varphi_1,\varphi_2$ as coordinates
on the base ${\mathbb T}^2$. We shall keep for them the same
notations. The following lemma holds

\begin{lemma}
In the coordinates $\varphi_1,\varphi_2$ the metric $ds^2$ has the
form
$$
 ds^2=a(\varphi_1,\varphi_2)d\varphi_1^2+2bd\varphi_1d\varphi_2+d\varphi_2^2,
$$
where $b\in{\mathbb R}$ is a constant and $a(\varphi_1,\varphi_2)$
is a positive periodic function.

\end{lemma}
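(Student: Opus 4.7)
The plan is to build the desired coordinates out of the Arnold--Liouville angles $\varphi^1,\varphi^2$ on the invariant torus $L$, exploiting the fact that $\pi|_L:L\to\mathbb{T}^2$ is a diffeomorphism and $L$ is Lagrangian. First, realize $L$ as the graph of a closed $1$-form $\alpha = p_1(\varphi)d\varphi^1 + p_2(\varphi)d\varphi^2$ (the restriction of the Liouville form to $L$). The hypothesis ${\rm sgrad}\,H|_L = (0,1)$ reads $\partial H/\partial p_j|_L = \delta^j_2$; substituting $H = g^{ij}p_ip_j$ and inverting the metric gives $p_j|_L = \tfrac{1}{2}g_{j2}$. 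In particular $p_2 = g_{22}/2$ and $p_1 = g_{12}/2$, and plugging back into $H$ yields $H|_L = g_{22}/4$.

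Next I would argue that $g_{22}$ is constant. The function $H|_L$ is automatically invariant along the $H$-flow direction $(0,1)$, and because $\{H,F\}=0$ it is also invariant along the $F$-flow direction $(\omega_1,\omega_2)$. Generically $\omega_1\ne 0$, so these two directions span the tangent plane to $L$ and force $H|_L$, hence $g_{22}$, to be constant. After a global rescaling we may set $g_{22}\equiv 1$, so $p_2 = 1/2$. The Lagrangian condition $d\alpha = 0$ then reads $\partial p_1/\partial\varphi^2 = \partial p_2/\partial\varphi^1 = 0$, showing that $p_1 = p_1(\varphi^1)$, i.e.\ $g_{12}$ depends only on $\varphi^1$.

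To finish, the remaining $\varphi^1$-dependence of $g_{12}$ can be absorbed by the linear-in-fibre gauge transformation $\tilde\varphi^2 = \varphi^2 + \psi(\varphi^1)$ with $\psi'(\varphi^1) := g_{12}(\varphi^1) - b$, where $b$ is the mean value of $g_{12}$ over the $\varphi^1$-period. Choosing $b$ as the mean makes $\psi$ periodic, so $(\varphi^1,\tilde\varphi^2)$ remain good coordinates on the covering plane (in fact on the torus). A direct computation then yields $\tilde g_{22} = 1$, $\tilde g_{12} = b$ is a genuine constant, and $\tilde g_{11}$ remains a positive periodic function. Renaming $\tilde\varphi^2$ back to $\varphi^2$ produces the metric in the form stated in the lemma, with positivity of $a$ equivalent to $a > b^2$.

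The main obstacle I anticipate is in the middle step: the conclusion $g_{22} = $ const relies on the $F$-orbits being transverse to the $H$-orbits, which can fail in the degenerate case $\omega_1 = 0$. This case needs to be excluded by a separate argument using functional independence of $F$ and $H$ on the regular level $\{F=1\}$. A minor annoyance is that the gauge transformation of the last step destroys the linearization of the $F$-flow in the new coordinates, but this is harmless since only the form of the metric is used in the sequel.
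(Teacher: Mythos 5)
Your computations are sound as far as they go, and the first half is a legitimate alternative to the paper's opening moves: deducing $g_{22}\equiv\mathrm{const}$ from the invariance of $H|_L=g_{22}/4$ under both commuting flows, and then exploiting the fact that the Lagrangian graph $L$ is the graph of a \emph{closed} one-form $p_1\,d\varphi^1+p_2\,d\varphi^2$, so that $p_2=\mathrm{const}$ forces $\partial_{\varphi^2}p_1=0$, i.e.\ $g_{12}=g_{12}(\varphi^1)$. (The paper instead gets $g_{22}=1$ in one line from the unit speed of the projected $H$-trajectories $\varphi_2\mapsto\varphi_{20}+\tau$.) But at this point you stop short of the statement. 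The lemma asserts that $b=g_{12}$ is constant \emph{in the angle coordinates themselves}, whereas your argument only yields $g_{12}=g_{12}(\varphi^1)$, which you then repair by the shear $\tilde\varphi^2=\varphi^2+\psi(\varphi^1)$. That proves a weaker statement -- the existence of \emph{some} periodic coordinates in which the metric has the desired form -- and your closing remark that destroying the linearization of the $F$-flow is ``harmless'' is wrong: immediately after the lemma the paper sets $\varphi_1=t$, $\varphi_2=x-bt$ and uses $\pi_*(\mathrm{sgrad}\,F)=(\omega_1,\omega_2+b\omega_1)=(\omega_1,n)$, i.e.\ precisely the constancy of the $F$-field in the angle coordinates, to identify $a_{n-1}/g=\omega_1$ and normalize $a_{n-1}\equiv g$. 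Your shear would invalidate that step.

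The missing idea is the homogeneity of $F$. The paper evaluates the inner product $\langle\pi_*(\mathrm{sgrad}\,H),\pi_*(\mathrm{sgrad}\,F)\rangle$ on $L$ in two ways: metrically it equals $b\omega_1+\omega_2$ (using $c=g_{22}=1$), while intrinsically it equals $p\,dq(\mathrm{sgrad}\,F)=p_1\frac{\partial F}{\partial p_1}+p_2\frac{\partial F}{\partial p_2}=nF=n$ by Euler's identity and the normalization $F|_L=1$. Hence $b\,\omega_1+\omega_2=n$ pointwise on the torus, which forces $b=(n-\omega_2)/\omega_1$ to be a genuine constant with no further change of coordinates. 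Your caveat about $\omega_1=0$ is real but is shared by the paper (which also divides by $\omega_1$, and later normalizes $\omega_1=1$); it is not a defect specific to your route. The actual defect is that you never bring the polynomial structure of $F$ into play, and that is exactly the ingredient that makes $g_{12}$ constant in the given coordinates rather than merely independent of $\varphi^2$.
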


\begin{proof} Let us assume that in the coordinates $\varphi_1,\varphi_2$
the metric $ds^2$ has the form
$$
 ds^2=a(\varphi_1,\varphi_2)d\varphi_1^2+2b(\varphi_1,\varphi_2)d\varphi_1d\varphi_2+c(\varphi_1,\varphi_2)d\varphi_2^2.
$$
According to our choice of  coordinates $(\varphi_1,\varphi_2)$ the
curves, defined in the parametric form $(\varphi_1=\varphi_{10},
\varphi_2=\varphi_{20}+\tau)$, are geodesics. Since the length of a
velocity vector of these geodesics is one, we get
$c(\varphi_1,\varphi_2)\equiv 1$. Moreover, we have
$$
 <\pi_*({\rm sgrad} H),\pi_*({\rm sgrad} F)>=b\omega_1+\omega_2,
$$
on the other hand it can be easily checked using the homogenuity of
$F$, that on $L$ the following holds:
$$
 <\pi_*({\rm sgrad} H),\pi_*({\rm sgrad} F)>=pdq({\rm sgrad} F)=$$
 $$=p_1\frac{\partial F}{\partial
 p_1}+p_2\frac{\partial F}{\partial p_2}=nF=nf=n.
$$
Here $pdq$ stands for the canonical one form in the cotangent
bundle. Thus $b\omega_1+\omega_2=nf=n$, and consequently,
$b=(n-\omega_2)/\omega_1$ is a constant.

\end{proof}
Let's introduce new coordinates $(t,x)$:
$$
 \varphi_1=t,\ \varphi_2=x-bt.
$$
Therefore, in the new coordinates the metric takes the form
$$
 ds^2=g^2dt^2+dx^2,
$$
where $g^2=a-b^2$. In the coordinates $t,x$ the vector field
$\pi_*({\rm sgrad} F)$ becomes equal to
$(\omega_1,\omega_2+b\omega_1)=(\omega_1,n).$

Write the integral $F$ with respect to these coordinates:
$$
F=\sum_{k=0}^n \frac{a_{k}(t,x)}{g^{n-k}}p_1^{n-k}p_2^{k},
$$
where $p_1, p_2$ are the conjugate momenta to $\dot{t},\dot{x}$
respectively. Then obviously for the torus $L$ we have $L=\{p_1=0,
p_2=1\}$ and so $F|_L=a_n\equiv1$. Moreover, partial derivatives of
$F$ are easily computed on $L$ to be:
$$\frac{\partial F}{\partial p_1}\Big |_L=\frac {a_{n-1}}{g},\quad
\frac{\partial F}{\partial p_2}\Big |_L=na_n=n.$$ Compare these
values with $\pi_*({\rm sgrad} F)=(\omega_1,n),$ we conclude
$$
 \frac {a_{n-1}}{g}=\omega_1.
$$
In addition, we shall assume that $\omega_1=1$. Indeed, this can be
achieved easily by a rescaling the $t$ variable, $t\rightarrow rt$
for some constant $r$. In order to complete the proof of Theorem
\ref{one} one has to write the condition $\{F,H\}=0$ explicitly and
in this way to get system (2). We omit  this computation. Notice
that by the very construction of the coordinates $t,x$ it follows
that for any function $f$ on the torus $\mathbb{T}^2$ it can be
written as a periodic function $f(\varphi_1,\varphi_2)$, which
equals $f(t,x-bt)$. Therefore, $f$ becomes periodic in $x$ and
(-quasi) periodic in $t$. Theorem 1.1 is proved.

%%%%%%%%%%%%%%%%%%%%%%%%%%%%%%%%%%%%%%%%%%%%%%%%%%%%%%%%%%%%%%%%%%%%%%%%%%%%%%%%%%%5
\section{Riemann invariants}
In this Section we show that the system (2) can be represented in
Riemann invariants in the hyperbolic domain by change of field
variables $(a_0,\dots,a_n)\rightarrow(r_1,\dots,r_n).$

Let's fix the energy level of the Hamiltonian
$H=\frac{1}{2}(\frac{p_1^2}{g^2}+p_2^2)=\frac{1}{2}$. Assume that
$p_1=g\cos\varphi,\ p_2=\sin\varphi$, where $\varphi$ is an
angular coordinate in the fibre. Then $F=F(t,x,\varphi)$ becomes a
trigonometric polynomial. We have
$$F=F(t,x,\varphi)=\sum_
{k=0}^{n}a_k cos^{n-k}\varphi\  sin^{k}\varphi;\quad
a_{n-1}=g,a_n=1. \eqno{(4)}$$
$$
 \frac{dF}{d\tau}=F_t\dot{t}+F_x\dot{x}+F_{\varphi}\dot{\varphi}=F_t\frac{\cos\varphi}{g}+
 F_x\sin\varphi+F_{\varphi}\dot{\varphi}=0.\eqno{(5)}
$$
\begin{lemma}
The following equality holds
$$
\chi_A(\lambda)=-\frac {g^{n-1}}{\cos^n\varphi}F_{\varphi}(\varphi),
$$
where $\chi_A$ is the characteristic polynomial of the matrix $A$
with the relation $\lambda=g\tan\varphi$.

\end{lemma}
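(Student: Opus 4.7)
The plan is to verify this polynomial identity by computing both sides as polynomials in $\lambda$ (after the substitution $\lambda = g\tan\varphi$) and matching them coefficient by coefficient. Conceptually, the content of the lemma is that the zeros of $F_\varphi$ single out the angles $\varphi$ for which $\dot\varphi$ vanishes along the energy level of the geodesic flow (cf.\ equation (5)), and these directions should correspond to the eigenvalues of the quasi-linear system (2); the identity makes this precise.

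For the characteristic polynomial $\chi_A(\lambda) = \det(\lambda I - A)$, I would exploit the very sparse structure of $A$: the only nonzero entries are $a_{n-1} = g$ on the subdiagonal, the $(n,n)$-entry $na_n - 2a_{n-2}$, and the whole last column, whose $k$-th entry is $k a_k - (n-k+2) a_{k-2}$ with the convention $a_{-1}=0$. Expanding $\det(\lambda I - A)$ along the last column, the minor $M_{k,n}$ obtained by deleting row $k$ and column $n$ has block upper-triangular form: an upper-left $(k-1)\times(k-1)$ lower-bidiagonal block with $\lambda$'s on the diagonal, contributing $\lambda^{k-1}$, and a lower-right $(n-k)\times(n-k)$ upper-triangular block with diagonal entries $-g$, contributing $(-g)^{n-k}$. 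Tracking the signs carefully yields the closed form
$$\chi_A(\lambda) = \lambda^n + \sum_{k=1}^{n}\bigl[(n-k+2) a_{k-2} - k a_k\bigr] g^{n-k} \lambda^{k-1}.$$

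For the right-hand side, the key simplification is to factor $F = \cos^n\varphi \cdot P(\tan\varphi)$ where $P(u) = \sum_{k=0}^{n} a_k u^k$. A short calculation using $\tfrac{d}{d\varphi}\tan\varphi = \sec^2\varphi$ gives
$$-\frac{g^{n-1}}{\cos^n\varphi}\, F_\varphi \;=\; g^{n-1}\bigl[\,n u\, P(u) - (1+u^2) P'(u)\,\bigr], \qquad u = \tan\varphi.$$
Substituting $u = \lambda/g$ and collecting powers of $\lambda$, the coefficient of $\lambda^{k-1}$ in $g^{n-1}[nuP(u) - (1+u^2)P'(u)]$ works out to be exactly $[(n-k+2)a_{k-2} - k a_k]\, g^{n-k}$, matching the formula above for $\chi_A$.

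The routine but delicate step is the bookkeeping of indices and signs in both computations. The substantive insight is that the particular combination $k a_k - (n-k+2) a_{k-2}$ appearing in the last column of $A$ is precisely the coefficient produced by the differential operator $(1+u^2)\partial_u - nu$ acting on $P(u)$; this is not a coincidence, since the system (2) was derived from $\{F,H\}=0$ in the same coordinates. I would also verify the identity on a low-dimensional case (say $n=2$) as a sanity check before writing out the general bookkeeping.
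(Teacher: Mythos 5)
Your proposal is correct: the closed form $\chi_A(\lambda)=\lambda^n+\sum_{k=1}^{n}[(n-k+2)a_{k-2}-ka_k]\,g^{n-k}\lambda^{k-1}$ from the cofactor expansion along the last column, and the factorization $F=\cos^n\varphi\,P(\tan\varphi)$ giving $-\frac{g^{n-1}}{\cos^n\varphi}F_\varphi=g^{n-1}[nuP-(1+u^2)P']$ with $u=\lambda/g$, do match coefficientwise (using $a_{n-1}=g$, $a_{n+1}=0$ to get the monic leading term). The paper gives no argument beyond ``direct verification,'' and your computation is exactly that verification carried out, so it is the same approach.
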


The proof can be easily obtained by a direct verification.
Remarkably this lemma gives a very clear geometric meaning of the
eigenvalues of the matrix $A(U)$ of our system. They correspond
precisely to those $\varphi$ where the derivative $F_{\varphi}$,
i.e where the invariant tori of the geodesic flow are tangent to
the fibres.

With the help of the lemma and the equations (4),(5) one can easily
write the system (2) in the form of Riemann invariants. Let
$\lambda_i, i=1,\dots,n$ be $n$ distinct real eigenvalues of $A(U)$.
Define $$ r_i=F(\varphi_i), \quad
\varphi_i=arctan(\lambda_i/g),\quad
\varphi_i\in[-\frac{\pi}{2};\frac{\pi}{2}]\quad for \quad
i=1,\dots,n.$$ It follows from the lemma that
$F^{\prime}_{\varphi}(\varphi_i)$ vanishes and so by the equation
(5) it follows that $$
 (r_i)_t+\lambda_i(r_i)_x=0,\  i=1,..,n .
$$
Moreover formula (4) implies that $(r_1,\dots,r_n)$ are regular
coordinates, since the Jacobian matrix for this change of variables
equals $$\Big(\frac{\partial r_i}{\partial
a_k}\Big)=\Big(cos^{n-k}\varphi_i \ sin^{k}\varphi_i\Big),\quad
i=1,\dots,n; \ k=0,\dots,n-1
 $$
which is non-degenerate matrix if
$\varphi_i\in[-\frac{\pi}{2};\frac{\pi}{2}]$ are distinct.

%%%%%%%%%%%%%%%%%%%%%%%%%%%%%%%%%%%%%%%%%%%%%%%%%%%%%%%%%%%%%%%%%%%%%%%%%%
\section{Conservation laws}
In this Section we show that the system (2) can be represented in
the form of the conservation laws. The method of proof of this
statement is based on the work \cite{B4}. Let us first rewrite
Hamiltonian system of the geodesic flow as a system with 1,5 degrees
of freedom away from the invariant torus $L$. This can be done as
follows. Any geodesic $\gamma (\tau)$ different from the "vertical"
ones $\{t=const\}$ has to have $\frac{dt}{d\tau}\neq 0$ and so can
be written as a graph $\{x=x(t)\}$. The length functional can be
rewritten in the following way:
$$
 S(\gamma)=\int|\dot{\gamma}|d\tau=\int\sqrt{{g^2\Big(\frac{dt}{d\tau}\Big)^2+
 \Big(\frac{dx}{d\tau}\Big)^2}}d\tau=
 \int \sqrt{(x^\prime)^2+g^2}=\int Ldt,
$$
where $\mathcal{L}(x^\prime,x,t)=\sqrt{(x^\prime)^2+g^2}$ where we
write (untraditionally)
$x^\prime=\frac{dx}{dt},\dot{\gamma}=\frac{d\gamma}{d\tau}$. Then
the momenta variable for $x^\prime$ is $p=\frac {\partial
\mathcal{L}}{\partial x^\prime}= \frac
{x^\prime}{\sqrt{(x^\prime)^2+g^2}}$.
%eto staryj impuls p_2 on zhe sin(\varphi)

Then Legendre transform for $\mathcal{L}$ is
$\mathcal{H}(p,x,t)=px^\prime-\mathcal{L}=-g\sqrt{1-p^2}$. So
Hamiltonian equations for geodesic flow get the form:

$$x^\prime=\frac{\partial \mathcal{H}}{\partial
p}=\frac{gp}{\sqrt{1-p^2}},$$
$$p^\prime=-\frac{\partial \mathcal{H}}{\partial
x}=g_x\sqrt{1-p^2}.
$$
Notice that in coordinates $(p,x,t)$ formula (4) of the previous
section for $F$  gets the form
$$F(p,x,t)=\sum_{k=0}^n a_k(1-p^2)^{\frac{k}{2}}p^{n-k},\eqno({4^\prime})$$
where the dependence on $(t,x)$ enters through the coefficients
only.

 Assume $L_1$ is any torus which is invariant under the flow given
 as a graph of a function $p=f(t,x)$. Then the invariance
 condition means that the 1-form $f_tdt+f_xdx-dp$ vanishes on the
 Hamiltonian vector field $\rm sgrad\mathcal{H}$. Therefore we have:
 $$f_t+f_x \frac{\partial \mathcal{H}}{\partial p}+\frac{\partial \mathcal{H}}{\partial
x}=0,$$ which is equivalent to
$$\frac{\partial f }{\partial t}+
\frac{\partial }{\partial x}\mathcal{H}(f(t,x),x,t)=0. \eqno(6)$$
Equation (6) implies that every invariant torus which projects
diffeomorphically to the base produces in fact a conservation low
for our quasi-linear system (2). More precisely, let us choose $n$
disjoint tori $L_1,\dots,L_n$ in the neighborhood of the torus $L$,
lying in regular levels $\{F=c_i\}$ such that their projections to
${\mathbb T}^2$ are diffeomorphisms.

Each of these torii $L_i$ is a graph of a function $f_i$, $
 p=f_i(a_0,\dots,a_{n-1})$. They satisfy the equation

$$
 F(a_0,\dots,a_{n-1},p)=c_i.\eqno{(7)}
$$

Notice that $f_i$ depend on $(t,x)$ implicitly through the
coefficients. Using (6) for $f_i$ we get the conservation laws:
$$
 \frac{\partial f_i }{\partial t}+\frac{\partial}{\partial x}(\mathcal{H}(f_i,x,t))=0.
$$
Let's show that the transition from the variables
$a_0,\dots,a_{n-1}$ to the variables $f_1,\dots,f_n$ is a
diffeomorphism. Introduce for convenience
$$u_1=a_0,\dots,u_{n-1}=a_{n-2},u_n=a_{n-1}=g.$$
 We have
$$
 F(u_1,\dots,u_n,f_i)=c_i,
$$
 Therefore
$$
 \frac{\partial F}{\partial u_j}+\frac{\partial F}{\partial
 p}\Big |_{p=f_i}\frac{\partial f_i}{\partial u_j}=0
$$
or equivalently in the matrix form we have:
$$A+B
    \left(
  \begin{array}{ccc}
   \frac{\partial f_1}{\partial u_1} & \dots & \frac{\partial f_1}{\partial u_n} \\
  \dots & \dots & \dots  \\
    \frac{\partial f_n}{\partial u_1} &
  \dots &   \frac{\partial f_n}{\partial u_n}\\
  \end{array}\right)=0.
$$
where
$$A=\left(
  \begin{array}{ccc}
   \frac{\partial F}{\partial u_1}(f_1) & \dots & \frac{\partial F}{\partial u_n}(f_1) \\
  \dots & \dots & \dots  \\
   \frac{\partial F}{\partial u_1}(f_n) &
  \dots &  \frac{\partial F}{\partial u_n}(f_n)\\
  \end{array}\right),\ B=\left(
  \begin{array}{cccc}
   \frac{\partial F}{\partial p}\Big|_{p=f_1} & 0 & \dots & 0 \\
  \dots & \dots & \dots  & \dots\\
   0 & 0 &
  \dots &  \frac{\partial F}{\partial p}\Big |_{p=f_n}\\
  \end{array}\right).$$

Note that the matrix $A=\left(\frac{\partial F}{\partial
u_j}(f_i)\right)$ is essentially Vandermonde matrix, in particular
it is non-degenerate since the torii $L_i$ are disjoint. The
diagonal matrix $B$ in the formula is non-degenerate either, because
the torii $L_i$ were chosen to be regular graphs. Therefore,
$$
 \frac{\partial (f_1,\dots, f_n)}{\partial (u_1,\dots,u_n)}\ne 0.
$$
This completes the proof of Theorem 2.

In practice it's difficult to reverse the equation (7) explicitly.
Therefore, instead of solving of the equation (7) we act in the
following way.
  Torus $L$ has the the equation $p=1$ in the energy level.
  The value of the integral $F$ equals 1 on $L$. For a small parameter
 $\varepsilon$ solve the equation

 $$F=\sum_{k=0}^n a_k(1-p^2)^{\frac{k}{2}}p^{n-k}=1+\varepsilon
$$
by substituting the power series
$$
 p=1-G_2\varepsilon^2-G_3\varepsilon^3\dots
.
$$
Then it follows from (6) that all the coefficients
$G_i(a_0,\dots,a_{n-1})$ are conservation laws.
\begin{example}
Let $n=3$. In this case $U=(a_0,a_1,a_2)^T$, with $a_2\equiv g,
a_3\equiv 1$.
 The matrix $A(U)$ has the form:
 $$A(U)=\left(
  \begin{array}{ccc}
0&0&a_1\\
a_2&0&2a_2-3a_0\\
0&a_2&3a_3-2a_1\\
\end{array}\right)
$$
The integral has the form:
$$F=a_0(1-p^2)^{\frac{3}{2}}+a_1(1-p^2)p+a_2(1-p^2)^{\frac{1}{2}}+
a_3p^3.\eqno(8)$$ Write
$$p=1-G_2\varepsilon^2-G_3\varepsilon^3-G_4\varepsilon^4\dots$$ and
substitute into  the expression for $F$ in (8), and equate to
$1+\varepsilon$. Computing coefficients for
$\varepsilon,\varepsilon^2,\varepsilon^3$ and equating them to
$1,0,0$ respectively one gets the following conservation laws:
$$ G_2=\frac{1}{2g^2},\\
G_3=\frac{3a_3-2a_1}{2g^4},\\
G_4=\frac{9}{8g^4}+\frac{5(3a_3-2a_1)^2}{8g^6}-\frac{a_0}{g^5}.$$
\end{example}
One can prove in general for any $n$, that all the conservation laws
obtained in this way are rational functions of the field variables.
It is not an easy exercise to verify by hands that these are in fact
conservation laws.
%%%%%%%%%%%%%%%%%%%%%%%%%%%%%%%%%%%%%
\section{Concluding remarks}
1. An intersting direction for further study would be to understand
algebraic properties of the system. Notice that the matrix $A(U)$
depends linearly on the components of $U$, this case was studied
first by Gelfand, Dorfman see \cite {gelfand}.

2. It is important to understand what the analysis along
characteristics can give for smooth solutions of the system.

3. It is not clear to us if the generalized hodograph method can
give a nontrivial information for system (2).

4. Let us mention an interesting connection between real eigenvalues
of the matrix $A(U)$ and the so called separatrix chains introduced
in \cite {B3}. The geometric language seems to be of tight relation
with analytic properties of our system.
%%%%%%%%%%%%%%%%%%%%%%%%%%%%%%%%%%%%%%%%%%%%%%%%%%%%%%%%%%%%%%

\end{document}